\newcommand{\Haus}{\dim_{\mathrm{H}}}
\newcommand{\Boxd}{\dim_{\mathrm{B}}}
\newtheorem*{thm*}{Theorem}
\newtheorem*{conj*}{Conjecture}
\newtheorem{thm}{Theorem}[section]
\newtheorem{lma}[thm]{Lemma}
\newtheorem{defn}[thm]{Definition}
\newtheorem{rem}[thm]{Remark}
\begin{document}

\title{Kakeya books and projections of Kakeya sets}

\author{Han Yu}
\address{Han Yu\\
School of Mathematics \& Statistics\\University of St Andrews\\ St Andrews\\ KY16 9SS\\ UK \\ }
\curraddr{}
\email{hy25@st-andrews.ac.uk}
\thanks{}

\subjclass[2010]{Primary: 28A78,28A80}

\keywords{Kakeya-Besicovitch conjecture, Hausdorff dimension}

\date{}

\dedicatory{}

\begin{abstract}
Here we show some results related with Kakeya conjecture which says that for any integer $n\geq 2$, a set containing line segments in every dimension in $\mathbb{R}^n$ has full Hausdorff dimension as well as box dimension. We proved here that the Kakeya books, which are Kakeya sets with some restrictions on positions of line segments have full box dimension. We also prove here a relation between the projection property of Kakeya sets and the Kakeya conjecture. If for any Kakeya set $K\subset\mathbb{R}^n$, the Hausdorff dimension of orthogonal projections on $k\leq n$ subspaces is independent of directions then the Kakeya conjecture is true. Moreover, the converse is also true.
\end{abstract}

\maketitle

\section{Introduction}\label{Intro}

A Kakeya set in $\mathbb{R}^n$ is a set which contains a unit line segment in every direction. There are good reference sources for this topic, for example in \cite{TT}, \cite{IL}, where we can find many recent results.

To clarify the conventions we use, consider the following restricted sense of Kakeya set:
\begin{defn}\label{KAKEYA}
	Let $\Omega\subset S^{n-1}$ be a set of directions, then a  set $K(\Omega)\subset\mathbb{R}^n$ is a $\Omega$-Kakeya set iff it is a bounded set and it is an union of lines in different direction:
	\[
	K(\Omega)=\bigcup_{\theta\in\Omega} l_\theta,
	\]
	where $l_\theta$ is a line segment with unit length centred at a point $a_\theta\in\mathbb{R}^n$ pointing in direction $\theta$.
\end{defn}

This definition is a bit more restrictive than other references in the sense that usually we only require that:
\[
K(\Omega)\supset\bigcup_{\theta\in\Omega} l_\theta.
\]
In this sense the definition \ref{KAKEYA} can be seen as Kakeya set 'without redundancy', namely, we can write $K(\Omega)$ as a union of line segments such that for every direction in $\Omega$ there is an unique line segment with that direction. Then we refer Kakeya set as following:
\begin{defn}\label{KAKEYA1}
	A set $K\subset\mathbb{R}^n$ is a Kakeya set iff there is a set of directions $\Omega\subset S^{n-1}$ with non empty interior such that $K$ is a $\Omega$-Kakeya set.
\end{defn}

A long standing conjecture is that for integer $n\geq 2$ any Kakeya set $K\subset\mathbb{R}^n$ has Hausdorff dimension $n$. It is true for Kakeya set in $n=2$ \cite{DA} and for $n\geq 3$ several partial results can be found, for example see the pre-mentioned websites \cite{TT}, \cite{IL}. Or more specifically in \cite{Ka}. In this paper we consider a special type of Kakeya set. In order to define it precisely we need the following topological considerations:

In $\mathbb{R}^n$, the direction of lines can be represented by $S^{n-1}\subset\mathbb{R}^n$, where:
\[
S^{n-1}=\big\{x\in\mathbb{R}^n| \|x\|=1\big\},
\]
where $\|.\|$ is the Euclidean norm. Then we consider the $k$-hyperplanes $H^k$ spanned by $k\leq n$ linearly independent vectors. The orientations of $(n-1)$-hyperplanes intersecting $(0,\dots,0)$ can be also represented by $S^{n-1}$. The usual way of doing this is to identify the orientation of a $(n-1)$-hyperplane with the direction of lines that are orthogonal to that hyperplane. For any $2$-hyperplane $H^{2}$, we can consider the intersection:
\[
H^{2}\cap S^{n-1},
\]
the intersection can be identified as a circle $S^1\subset H^{2}$. Then for any $v\in H^{2}\cap S^{n-1}$, there is a $(n-1)$-hyperplane $H^{n-1}(v)$ orthogonal to $v$. We shall refer following set as \emph{pages} with respect to $H^2$:
\[
P(H^2)=\big\{H^{n-1}(v)| v\in H^2\cap S^{n-1} \big\}.
\] 
We refer $v$ as the directional angle of the hyperplane $H^{n-1}(v)$. Or more conveniently, we can isometrically parametrize  $\phi:H^{2}\cap S^{n-1}\to [0,2\pi)$ and refer the direction angle as the corresponding number $\phi(v)$ in $[0,2\pi)$. There are different isometric parametrizations $\phi$, but different such parametrizations differ each other by shifting a constant $\mod 2\pi$. Namely for any two isometric parametrizations $\phi,\phi'$ there is a constant $c\geq 0$ such that:
\[
\phi(v)=\phi'(v)+c\mod 2\pi.
\]
We shall fix those isometric parametrizations once for all and later we will state results without mentioning the parametrization.
\begin{defn}[Kakeya Book]
We define $n$-Kakeya book for integer $n$ inductively.

For $n=2$, a $2$-Kakeya book is a bounded Kakeya set in $\mathbb{R}^2$.

For $n=k+1$ where $k\geq 2$, if there exist a $2$-hyperplane $H^2$ such that for any $k$-hyperplane $H^{k}$ in the \emph{pages} $P(H^2)$ we can find a vector $a\in\mathbb{R}^{k+1}$ such that the affine hyperplane $H^{k}+a$ contains a bounded $k$-Kakeya book. Furthermore, all those $k$-Kakeya books are uniformly bounded in the sense that there is a universal bound $C>0$ such that all $k$-Kakeya books are contained in $B(0,C)$.
\end{defn}
\begin{rem}
For $n=3$, the normal direction of $2$ hyperplane can be viewed as $S^2$, and for a great circle $S^1\subset S^2$, and for any $\theta\in S^1$ we can find a $2$ dimensional affine hyperplane $H_\theta$ orthogonal with direction $\theta$ such that $H_\theta$ contains a $2$-Kakeya book, which is a Kakeya set in $\mathbb{R}^2$. For example the unit ball $B(0,1)$ is a $3$-Kakeya book.
\end{rem}

It turns out that Kakeya books always have full box dimension, namely:
\begin{thm}\label{book}[Kakeya book theorem]
For any $n\geq 2$, any $n$-Kakeya book has box dimension $n$.
\end{thm}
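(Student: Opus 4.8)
The plan is to induct on $n$. For the base case $n=2$, a $2$-Kakeya book is by definition a bounded planar Kakeya set, and such a set has Hausdorff dimension $2$ by the theorem of Davies \cite{DA}; since box dimension dominates Hausdorff dimension and the set lies in $\mathbb{R}^2$, its box dimension is exactly $2$. For the inductive step I assume every $k$-Kakeya book (in any isometric copy of $\mathbb{R}^k$) has box dimension $k$, fix a $(k+1)$-Kakeya book $K\subset B(0,C)$ with witnessing $2$-plane $H^2$, and reduce the whole problem to the box-counting lower bound $N_K(\delta)\gtrsim\delta^{-(k+1)+o(1)}$, which combined with the trivial upper bound $\Boxd K\leq k+1$ gives $\Boxd K=k+1$.

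I first record the geometry. Writing $\mathbb{R}^{k+1}=H^2\oplus(H^2)^\perp$ with $\dim(H^2)^\perp=k-1$, let $P\colon\mathbb{R}^{k+1}\to H^2$ be the orthogonal projection. Each page $\Pi_\phi$ is an affine $k$-plane containing a translate of $(H^2)^\perp$, so $P(\Pi_\phi)=\ell_\phi$ is an affine \emph{line} in $H^2$ whose direction is orthogonal to the directional angle $\phi$, and the contained book $B_\phi\subset\Pi_\phi$ projects into $\ell_\phi$. By the inductive hypothesis each $B_\phi$, viewed inside $\Pi_\phi\cong\mathbb{R}^k$, has box dimension $k$, so the number $N_{\phi}(\delta)$ of $\delta$-cubes meeting $B_\phi$ is essentially its planar covering number.

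The heart is a weighted incidence count. For small $\delta$ I select $\approx\delta^{-1}$ directional angles that are $\delta$-separated, and for a $\delta$-cube $Q$ let $\mu(Q)$ be the number of selected pages whose book meets $Q$. Swapping the order of summation gives the identity $\sum_j\sum_{Q\cap B_{\phi_j}\neq\emptyset}\mu(Q)^{-1}=\#\{Q:\mu(Q)\geq1\}\leq N_K(\delta)$, which reduces everything to a \emph{per-page} lower bound for $\sum_Q\mu(Q)^{-1}$. The key geometric lemma is that, because the selected lines $\ell_{\phi_j}$ have $\delta$-separated angles, two such lines at angle $\Delta$ apart are $\delta$-close only over a length $\approx\Delta^{-1}\delta$, i.e.\ across $\approx\Delta^{-1}$ of the $\delta$-discs on a given line; summing against $\Delta_{ij}\gtrsim\delta|i-j|$ shows that along any one line the total number of $\delta$-discs met by $m$ other lines is $\lesssim\delta^{-1}m^{-1}\log(1/\delta)$. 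Lifting through the bounded $(k-1)$-dimensional fibre of $P$ (at most $(C/\delta)^{k-1}$ cubes per disc) bounds the cubes of $B_{\phi_j}$ with $\mu\geq m$ by $\lesssim\delta^{-k}m^{-1}\log(1/\delta)$. Thus most cubes of each book carry multiplicity $O(\log(1/\delta))$, which yields $\sum_Q\mu(Q)^{-1}\gtrsim N_{\phi_j}(\delta)^2\,\delta^{k}/\log(1/\delta)$; summing over the $\approx\delta^{-1}$ selected pages produces the target estimate as soon as each $N_{\phi_j}(\delta)\approx\delta^{-k}$.

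The main obstacle is precisely that last proviso. Near a concurrency point of the lines $\ell_\phi$ the multiplicity $\mu$ can be as large as $\delta^{-1}$, and the inductive hypothesis only gives $N_\phi(\delta)\geq\delta^{-(k-\varepsilon)}$ below a threshold $\delta_0(\phi,\varepsilon)$ that may depend wildly on $\phi$, so at a single fixed scale many pages could be far from full dimension. The inverse-multiplicity weighting already neutralises concurrency, since the incidence lemma makes high-multiplicity cubes rare. The uncontrolled dependence of $\delta_0$ on $\phi$ I handle by monotonicity: for fixed $\varepsilon$ the \emph{good} set $G(\delta)=\{\phi:\delta<\delta_0(\phi,\varepsilon)\}$ increases to full measure as $\delta\to0$, so for all sufficiently small $\delta$ it has measure close to $2\pi$ and hence contains $\gtrsim\delta^{-1}$ angles that are $\delta$-separated. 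Restricting the count to these good pages forces $N_{\phi_j}(\delta)\geq\delta^{-(k-\varepsilon)}$ simultaneously for every term, giving $N_K(\delta)\gtrsim\delta^{-(k+1-2\varepsilon)}/\log(1/\delta)$ for all small $\delta$; letting $\varepsilon\to0$ shows the lower box dimension of $K$ is at least $k+1$ and completes the induction.
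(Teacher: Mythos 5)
Your proof is correct, but it follows a genuinely different route from the paper's. The paper never invokes the inductive hypothesis ``a $k$-book has box dimension $k$'' as a black box: instead it works with Lebesgue measure of $\epsilon$-neighbourhoods, selecting tubes with directions separated by $\epsilon^{\alpha}$ ($\alpha<1$) inside each page, bounding pairwise intersections of tubes (Lemma \ref{L1}) and of page-neighbourhoods (Lemma \ref{lm1}) by $\epsilon^{2}/\sin d$, and applying the Bonferroni inequality $|\bigcup T_i|\ge\sum|T_i|-\sum|T_i\cap T_j|$ at each level of the induction; the sub-unit exponent $\alpha$ is then sent to $1$. You instead run the discrete, box-counting incarnation of the same C\'ordoba-type $L^2$ idea: an inverse-multiplicity identity plus Cauchy--Schwarz per page, with the covering numbers $N_{\phi}(\delta)\approx\delta^{-k}$ supplied by the inductive hypothesis rather than by re-selecting tubes all the way down. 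What your version buys is a cleaner induction (each level is treated as a black box) and an honest treatment of a point the paper sidesteps, namely that the scale $\delta_0(\phi,\varepsilon)$ below which a page's book achieves covering number $\delta^{-(k-\varepsilon)}$ is not uniform in $\phi$; the paper avoids this only because its nested tube construction produces uniform constants, at the cost of a more entangled argument and a loss $\epsilon^{1-\alpha}$ per level that must be absorbed by the limit $\alpha\to1$. Your base case also leans on Davies's theorem rather than the quantitative Lemma \ref{L1}, which is legitimate since lower box dimension dominates Hausdorff dimension. Two small points you should make explicit in a written version: (i) the sets $G(\delta)$ need not be Lebesgue measurable without further assumptions on $\phi\mapsto B_\phi$, so ``increases to full measure'' should be phrased via the continuity from below of Lebesgue \emph{outer} measure for increasing sequences of arbitrary sets (which still yields $\gtrsim\delta^{-1}$ many $\delta$-separated good angles); and (ii) since $\phi$ and $\phi+\pi$ give nearly parallel lines $\ell_\phi$, the angular sum $\sum_i 1/|\sin(\phi_i-\phi_j)|$ has two logarithmic singularities rather than one, which changes nothing but should be acknowledged (the paper's Lemma \ref{L1} has the same cosmetic issue).
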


Of course Kakeya books are very special type of Kakeya sets in the sense we require more on the configuration of lines. For discussions on box dimensions including precise definitions and properties see \cite[chapter 3]{Fa}.

Another consideration here is the projection property of Kakeya sets. Intuitively speaking, any orthogonal projection of a Kakeya set is also Kakeya set. In this sense, a Kakeya set should carry some orientation invariance.  Given some recent results on projection of self similar sets \cite{SH}, \cite{Fal} it is plausible to make the following conjecture:
\begin{conj*}[Projection Conjecture]
	Let $K\subset\mathbb{R}^n$ be a Kakeya set, then for any integer $0<k<n$ and for \emph{all} $\gamma\in G(n,k)$ the Hausdorff dimension is constant, namely there is a number $c$ such that:
	\[
	\Haus (\pi_\gamma K)=c.
	\]
\end{conj*}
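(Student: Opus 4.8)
The plan is to first isolate the mechanism by which a Kakeya set controls \emph{all} of its projections, and then to recognize that pushing this mechanism to its sharpest conclusion is precisely the Kakeya conjecture. I would begin with the elementary but central observation that an orthogonal projection of a Kakeya set is again, up to rescaling, a Kakeya set. Concretely, fix $\gamma\in G(n,k)$ and write $K=\bigcup_{\theta\in\Omega}l_\theta$ with $\Omega$ open. A unit segment $l_\theta$ is carried by $\pi_\gamma$ to a segment in $\gamma$ pointing in the direction $\pi_\gamma\theta/\|\pi_\gamma\theta\|$ of length $\|\pi_\gamma\theta\|$, provided $\pi_\gamma\theta\neq 0$. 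The map $\theta\mapsto \pi_\gamma\theta/\|\pi_\gamma\theta\|$ from $\{\theta\in S^{n-1}:\pi_\gamma\theta\neq 0\}$ onto $S^{k-1}\subset\gamma$ is a submersion, hence open, so it sends the open set $\Omega$ (after discarding the measure-zero part $\Omega\cap\gamma^\perp$) onto an open subset $\Omega'\subset S^{k-1}$. Restricting to a compact piece of $\Omega$ bounded away from $\gamma^\perp$, the lengths $\|\pi_\gamma\theta\|$ stay above some $\delta>0$, so $\pi_\gamma K$ contains a segment of length $\geq\delta$ in every direction of $\Omega'$; after truncating and rescaling by $\delta^{-1}$ this is a genuine Kakeya set inside $\gamma\cong\mathbb{R}^k$ in the sense of Definition \ref{KAKEYA1}.

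The immediate consequence is a uniform lower bound: for every $\gamma\in G(n,k)$,
\[
\Haus(\pi_\gamma K)\ \geq\ \inf\big\{\Haus(K'):K'\subset\mathbb{R}^k\ \text{a Kakeya set}\big\}.
\]
Together with the trivial upper bound $\Haus(\pi_\gamma K)\leq k$, this shows that if the Kakeya conjecture holds in dimension $k$ — so that the infimum on the right equals $k$ — then $\Haus(\pi_\gamma K)=k$ for \emph{every} $\gamma$, and in particular the projected dimension is constant. This already settles the easy cases $k=1$ (where $l_u$ itself projects isometrically onto a unit interval, forcing the value $1$) and $k=2$ (where the planar Kakeya theorem of \cite{DA} forces the value $2$), which I would record as unconditional corollaries.

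For the general assertion I would then supply the converse half via Marstrand's projection theorem: for a \emph{fixed} $K$ and almost every $\gamma\in G(n,k)$ one has $\Haus(\pi_\gamma K)=\min(k,\Haus K)$. Hence if the projected dimension is genuinely constant, its common value must equal $\min(k,\Haus K)$, which pins the constant to the dimension of $K$ itself and, read against the lower bound above, forces $\Haus K$ to be as large as the Kakeya phenomenon in dimension $k$ permits. Iterating this comparison over $k$ is what yields the equivalence between the constancy statement and the Kakeya conjecture announced in the abstract.

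The main obstacle is precisely the step one cannot take for free. The submersion argument only manufactures a Kakeya set inside $\gamma$, so it bounds $\Haus(\pi_\gamma K)$ from below by the \emph{current} Kakeya bound in dimension $k$ and never beyond it. Upgrading "$\geq$ the known Kakeya bound" to "$=k$ for all directions" is equivalent to resolving the Kakeya conjecture itself, and no direction-by-direction estimate escapes this: the worst-case projection direction is exactly where a hypothetical low-dimensional Kakeya set would be detected. I therefore expect the honest outcome to be the conditional equivalence — constancy of all $k$-plane projections holds if and only if the Kakeya conjecture holds — rather than an unconditional proof of the displayed conjecture.
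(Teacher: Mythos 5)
First, note that the displayed statement is an open conjecture: the paper never proves it unconditionally, but only establishes Theorem \ref{Th1}, its equivalence with the Kakeya conjecture. You correctly diagnose this, and your forward direction coincides with the paper's: an orthogonal projection of a Kakeya set is again (after truncation and rescaling) a Kakeya set, which is exactly the implication the paper dismisses as ``trivial,'' and which unconditionally settles $k=1$ and, via the planar Kakeya theorem, $k=2$.

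However, your converse direction --- ``projection constancy $\Rightarrow$ Kakeya conjecture,'' which is the substantive half of the equivalence --- has a genuine gap. Applying constancy together with Marstrand's theorem to $K$ itself only tells you that $\Haus(\pi_\gamma K)=\min(k,\Haus K)$ for every $\gamma\in G(n,k)$ and every $k<n$. This is entirely consistent with, say, $\Haus K=n-\tfrac12$: in that case every $k$-plane projection of $K$ would have dimension exactly $k$ for each $k\le n-1$, which is constant in $\gamma$, and no contradiction arises. ``Iterating this comparison over $k$'' therefore never forces $\Haus K=n$; the case $k=n$, which is the one that would detect a defective $K$, is simply not available inside $\mathbb{R}^n$. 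The paper's essential extra idea, which your proposal lacks, is the lifting construction of Theorem \ref{MAIN}: one realizes a sub-Kakeya set of $K$ as a distinguished $n$-plane projection of a lifted set $\tilde K\subset\mathbb{R}^{2n-1}$, shows via the Spaghetti Lemma (Lemma \ref{spa}) that another projection of $\tilde K$ has non-empty interior so that $\Haus\tilde K\ge n$, extends $\tilde K$ to a genuine Kakeya set in $\mathbb{R}^{2n-1}$ without disturbing the distinguished projection, and only then invokes constancy of $n$-plane projections in the \emph{higher-dimensional} ambient space: Marstrand gives dimension $n$ for almost every $\gamma\in G(2n-1,n)$, so constancy forces the distinguished projection, and hence $K$, to have Hausdorff dimension $n$. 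Without passing to $\mathbb{R}^{2n-1}$ the converse cannot be closed by your method.
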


Here $G(n,k)$ is the Grassmannian manifold which represents the orientations of $k$-subspaces of $\mathbb{R}^n$. $\pi_\gamma$ for $\gamma\in G(n,k)$ is the orthogonal projection onto a $k$-subspace oriented in $\gamma$. A result of Marstrand and Mattila \cite[chapter 3]{Ma} says that for any Borel set $F\subset\mathbb{R}^n$, the Hausdorff dimension $\dim_{\mathrm{H}} \pi_\gamma (F)$ of almost all orthogonal projections $\gamma\in G(n,k)$ onto $k$-subspaces is equal to:
\[
\max\{k,\dim_{\mathrm{H}} F\}.
\]
And we have the following theorem:
\begin{thm}\label{Th1}
	Kakeya conjecture $\iff$ Projection conjecture.
\end{thm}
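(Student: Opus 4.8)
The plan is to prove the two implications separately, with essentially all of the difficulty lying in the direction Projection conjecture $\Rightarrow$ Kakeya conjecture. Both directions rest on one elementary observation that I would establish first: \emph{an orthogonal projection of a Kakeya set is again a Kakeya set in the target space}. Concretely, let $K=\bigcup_{\theta\in\Omega}l_\theta\subset\mathbb{R}^n$ be a Kakeya set with $\Omega\subset S^{n-1}$ of nonempty interior, and fix $\gamma\in G(n,k)$ with associated $k$-subspace $V$. Since $V^\perp\cap S^{n-1}$ is an $(n-k-1)$-sphere, it has empty interior in $S^{n-1}$, so I may choose an open $\Omega'\subset\Omega$ bounded away from $V^\perp$. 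For $\theta\in\Omega'$ the segment $l_\theta$ projects to a segment $\pi_\gamma l_\theta\subset V$ of length bounded below by some $\delta>0$, pointing in direction $\pi_\gamma\theta/\|\pi_\gamma\theta\|$. Because $\theta\mapsto\pi_\gamma\theta/\|\pi_\gamma\theta\|$ is an open map of $S^{n-1}\setminus V^\perp$ onto $S^{k-1}$, these image directions sweep out an open subset of $S^{k-1}$; after rescaling by $1/\delta$, which preserves Hausdorff dimension, $\pi_\gamma K$ contains a Kakeya set in $V\cong\mathbb{R}^k$ in the sense of Definition \ref{KAKEYA1}.

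For Kakeya $\Rightarrow$ Projection, assume every Kakeya set is full-dimensional in every dimension, and fix a Kakeya set $K\subset\mathbb{R}^n$, an integer $0<k<n$, and $\gamma\in G(n,k)$. On the one hand $\pi_\gamma K$ lies in a $k$-plane, so $\Haus(\pi_\gamma K)\le k$; on the other hand the observation above makes $\pi_\gamma K$ contain a Kakeya set in $\mathbb{R}^k$, which by the Kakeya conjecture in dimension $k$ has Hausdorff dimension $k$. Hence $\Haus(\pi_\gamma K)=k$ for every $\gamma\in G(n,k)$, a constant independent of direction, which is exactly the Projection conjecture with $c=k$.

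For the converse, assume the Projection conjecture and let $K\subset\mathbb{R}^n$ be a Kakeya set with $d=\Haus K$. The Marstrand--Mattila theorem gives $\Haus(\pi_\gamma K)=\min\{k,d\}$ for almost every $\gamma\in G(n,k)$, and since a Lipschitz image into a $k$-plane cannot exceed dimension $\min\{k,d\}$, the constant forced by the Projection conjecture must equal $c_k=\min\{k,d\}$, with \emph{no} exceptional directions. Proceeding by induction on $n$, with base case $n=2$ supplied by Davies' theorem \cite{DA}, the observation shows that each hyperplane projection $\pi_\gamma K$, $\gamma\in G(n,n-1)$, is a Kakeya set in $\mathbb{R}^{n-1}$, so the inductive hypothesis yields $\Haus(\pi_\gamma K)=n-1$ and therefore $d\ge n-1$. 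It then remains to upgrade $d\ge n-1$ to $d=n$.

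This last step is where I expect the genuine obstacle to sit. Once $d\ge n-1\ge k$ the projection dimensions all saturate at their maximal value $c_k=k$, so the sizes of the images cannot by themselves distinguish $d=n-1$ from $d=n$; indeed in $\mathbb{R}^3$ the projection property already holds unconditionally—line projections are one-dimensional and plane projections are planar Kakeya sets, full by Davies—yet the three-dimensional Kakeya conjecture is open. This strongly suggests that the implication cannot be extracted from image dimensions alone, and that the missing unit of dimension must instead be recovered from the finer incidence structure of the line segments, for instance by arguing that a Kakeya set with $\Haus K\in[n-1,n)$ is forced to exhibit a deficient (exceptional) projection at some level, in contradiction with the assumed constancy. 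Turning the hypothesis of ``no exceptional directions'' into the conclusion of full dimension is the crux of the argument, and the step I would expect to absorb essentially all of the work.
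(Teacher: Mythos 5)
Your easy direction (Kakeya $\Rightarrow$ Projection) is correct and agrees with the paper, which simply notes that projections of Kakeya sets are Kakeya sets; your more careful justification that the projected directions contain an open subset of $S^{k-1}$ is a welcome elaboration. However, for the converse your proposal has a genuine gap, and you have in fact diagnosed it accurately yourself: working with projections of $K$ itself can never do better than $\Haus K\geq n-1$, because once the image dimensions saturate at $k$ they carry no further information. You stop at exactly the point where the real idea is needed, so the proof is incomplete.

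The missing idea in the paper is to apply the Projection conjecture not to $K$ but to an auxiliary Kakeya set in a \emph{higher-dimensional} space, of which $K$ is one projection. Concretely (Theorem \ref{MAIN}): restrict to segments whose directions lie in an open set $\Omega$ transversal to a fixed hyperplane $H$, embed $K(\Omega)\subset\mathbb{R}^n\times 0^{n-1}\subset\mathbb{R}^{2n-1}$, and lift each segment $l_\theta$ by translating it in the extra $n-1$ coordinates by the point $x_\theta$ where its extension meets $H$. The resulting $\tilde K\subset\mathbb{R}^{2n-1}$ has two useful projections: one onto the first $n$ coordinates returns $K(\Omega)\subset K$, while another (onto the complement of the diagonal copy of $H$) sends all lifted lines through a common point, producing a set with nonempty interior by the Spaghetti Lemma \ref{spa}. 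Hence $\Haus\tilde K\geq n$, and by Marstrand--Mattila almost every projection of $\tilde K$ onto $n$-planes has dimension $n$. One then extends $\tilde K$ to a genuine Kakeya set in $\mathbb{R}^{2n-1}$ without disturbing the projection that recovers $K$, and the assumed constancy of projection dimensions for \emph{that} Kakeya set forces the constant to be $n$, so $\Haus K=n$. This ``go up, then project back down'' device is precisely what converts the hypothesis of no exceptional directions into full dimension, and it is absent from your proposal.
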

\section{Proof of Kakeya book theorem}
We know that for $n=2$ any $2$-Kakeya book has Hausdorff dimension $2$, however we need a more precise quantitative result:
\begin{lma}\label{L1}
In $\mathbb{R}^2$. Let $0<c\leq 2\pi$ be a positive number and $\epsilon>0$ be any positive number, further let $\alpha<1$ be any positive number. Now suppose that we have $(1\times\epsilon)$-rectangles whose long side have directional angles $0,\epsilon^\alpha,2\epsilon^\alpha,3\epsilon^\alpha,\dots,[c/\epsilon^\alpha]\epsilon^\alpha$ (the positions can be arbitrary, and we shall refer such rectangles as tubes). Then there is a positive number $A>0$ such that for all $\epsilon>0$, the union of those tubes have Lebesgue measure at least:
\[
|\bigcup_i T_i|\geq A\epsilon^{1-\alpha}.
\]
Here $T_i\subset\mathbb{R}^2$ are the tube with direction $i\epsilon^{\alpha}, i\in\{0,1,\dots,[c/\epsilon^\alpha]\}$ and $[.]$ denotes the integer part of a positive real number.
\end{lma}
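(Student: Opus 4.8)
The plan is to avoid inclusion--exclusion (whose error term is only borderline controllable here) and instead run the standard $L^2$, or Cauchy--Schwarz, lower bound for a union of tubes. Assume first that $\epsilon$ is small; the finitely many scales bounded away from $0$ will be dispatched at the end by the trivial presence of a single tube of area $\epsilon$. Write $N=[c/\epsilon^\alpha]+1$ for the number of tubes, let $\chi_i$ be the characteristic function of $T_i$, and set $f=\sum_i \chi_i$. Since each $T_i$ is a $1\times\epsilon$ rectangle we have $\int f=\sum_i |T_i|=N\epsilon\asymp c\,\epsilon^{1-\alpha}$. Because $f$ is supported on $U:=\bigcup_i T_i$, Cauchy--Schwarz gives $\big(\int f\big)^2=\big(\int_U f\big)^2\le |U|\int f^2$, hence
\[
|U|\ \ge\ \frac{\big(\int f\big)^2}{\int f^2}\ =\ \frac{(N\epsilon)^2}{\int f^2}.
\]
Everything therefore reduces to the single upper bound $\int f^2=O(N\epsilon)$, which is what I would spend the bulk of the argument on. I emphasize that this is exactly why I prefer Cauchy--Schwarz to a Bonferroni estimate: the latter would require the cross term $\sum_{i<j}|T_i\cap T_j|$ to be of \emph{lower} order than $N\epsilon$, whereas here I only need $\int f^2\asymp\int f$, i.e. the cross term may be comparable to the main term.

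The quantity $\int f^2=\sum_{i,j}|T_i\cap T_j|$ I would control tube by tube. Fixing $i$, I estimate $\sum_j |T_i\cap T_j|$ using the elementary geometric fact that two $1\times\epsilon$ tubes whose long directions make an acute angle $\beta$ intersect in area at most $C\epsilon^2/\sin\beta$ as soon as $\sin\beta\gtrsim\epsilon$, while trivially $|T_i\cap T_j|\le\epsilon$ for \emph{every} pair. For the tube $T_j$ the relevant angle is $\beta_d=\operatorname{dist}(d\epsilon^\alpha,\pi\mathbb{Z})$ with $d=|i-j|$, and the key structural input is that the prescribed directions are $\epsilon^\alpha$-separated with $\epsilon^\alpha\gg\epsilon$ (as $\alpha<1$).

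Summing over $j$, the pairs split into two families. For the \emph{transverse} pairs, where $\beta_d\gtrsim\epsilon$, the separation makes $\beta_d$ comparable to $k\epsilon^\alpha$ for successive integers $k$, so $\sum_j \epsilon^2/\sin\beta_d\lesssim \epsilon^{2-\alpha}\sum_k 1/k\lesssim \epsilon^{2-\alpha}\log(1/\epsilon)$, which is $o(\epsilon)$ precisely because $\epsilon^{1-\alpha}\log(1/\epsilon)\to 0$. The \emph{near-parallel} pairs, where $\beta_d<\epsilon$, are the only genuine danger, and this is the step I expect to be the main obstacle: for them the transverse estimate fails and I can use only $|T_i\cap T_j|\le\epsilon$, so an adversarial placement could make each such overlap as large as a full tube. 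The saving point is a counting argument: $\beta_d<\epsilon$ forces $d\epsilon^\alpha$ to lie within $\epsilon$ of a multiple of $\pi$, and since consecutive values of $d$ move $d\epsilon^\alpha$ by $\epsilon^\alpha\gg\epsilon$, at most $O(1)$ such $d$ can occur (the multiples of $\pi$ in $[0,c]\subseteq[0,2\pi]$ being at most two). Hence the near-parallel pairs contribute only $O(\epsilon)$ as well.

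Combining the two families gives $\sum_j |T_i\cap T_j|=O(\epsilon)$ uniformly in $i$, whence $\int f^2=\sum_i \sum_j |T_i\cap T_j|=O(N\epsilon)$. Feeding this into the Cauchy--Schwarz bound yields $|U|\gtrsim N\epsilon\asymp \epsilon^{1-\alpha}$, establishing the claim for all sufficiently small $\epsilon$, say $\epsilon\le\epsilon_0$, with some constant $A_0>0$. For $\epsilon\ge\epsilon_0$ the union contains at least one tube, so $|U|\ge\epsilon=\epsilon^\alpha\epsilon^{1-\alpha}\ge \epsilon_0^{\alpha}\epsilon^{1-\alpha}$; taking $A=\min\{A_0,\epsilon_0^{\alpha}\}$ then covers all $\epsilon>0$ and completes the proof.
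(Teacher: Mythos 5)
Your proof is correct, but it takes a genuinely different route from the paper's. The paper runs a Bonferroni (truncated inclusion--exclusion) bound $|\bigcup_i T_i|\ge\sum_i|T_i|-\sum_{i\ne j}|T_i\cap T_j|$, estimating every pairwise overlap by $\epsilon^2/\sin(|i-j|\epsilon^\alpha)\lesssim\epsilon^{2-\alpha}/|i-j|$ and summing to get a cross term $O(\epsilon^{2-2\alpha}\log(1/\epsilon))$, which must be of \emph{strictly lower} order than the main term $\epsilon^{1-\alpha}$ for the subtraction to survive. You instead use Cauchy--Schwarz, $|U|\ge(\int f)^2/\int f^2$ with $f=\sum_i\chi_{T_i}$, which only needs $\int f^2=O(\int f)$, i.e.\ the overlap sum may be \emph{comparable} to the main term. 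That extra slack is not merely cosmetic: since $c$ may be as large as $2\pi$, pairs with $|i-j|\epsilon^\alpha$ near $\pi$ or $2\pi$ are nearly parallel as undirected tubes, the inequality $\sin(|i-j|\epsilon^\alpha)\gtrsim|i-j|\epsilon^\alpha$ used in the paper fails for them, and with adversarial positions each such pair can overlap in measure $\approx\epsilon$, so these pairs alone can contribute $\asymp N\epsilon$ to the cross term --- enough to wipe out the Bonferroni main term entirely. Your split into transverse pairs (controlled by the $\epsilon^2/\sin\beta$ estimate together with the $\epsilon^\alpha$-separation, giving $O(\epsilon^{2-\alpha}\log(1/\epsilon))$ per fixed tube) and the $O(1)$ near-parallel pairs per tube (controlled by the trivial bound $|T_i\cap T_j|\le\epsilon$) handles exactly the case the paper's estimate glosses over, and your closing reduction of the ``for all $\epsilon>0$'' claim to small $\epsilon$ plus a single-tube bound is also needed and correctly supplied. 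The only nitpick is that $[0,2\pi]$ contains three multiples of $\pi$, not two; this changes nothing since the count remains $O(1)$.
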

\begin{rem}
This implies that any $2$-Kakeya book/Kakeya set has box dimension $2$.
\end{rem}
\begin{proof}
Let $T_i$ be the tube with directional angle $i\epsilon^{\alpha}$. Then we see that for any two tubes $T_i, T_j$ the Lebesgue measure of their intersection can be bounded from above:
\[
|T_i\cap T_j|\leq \frac{1}{\sin (|i-j|\epsilon^\alpha)}\epsilon^2\leq \frac{2}{\pi}\frac{1}{|i-j|\epsilon^\alpha}\epsilon^2.
\]
So we see that there is are constants $C,C'>0$:
\begin{eqnarray*}
\sum_{i,j} |T_i\cap T_j|&\leq&\sum_{i,j} \frac{2}{\pi}\frac{1}{|i-j|}\epsilon^{2-\alpha}\leq\frac{2}{\pi}\epsilon^{2-\alpha}\bigg[\frac{c}{\epsilon^\alpha}\bigg]\sum_{i}\frac{1}{i}\leq C'\frac{2}{\pi}\epsilon^{2-\alpha}\bigg[\frac{c}{\epsilon^\alpha}\bigg]\log \bigg[\frac{c}{\epsilon^\alpha}\bigg]\\
&\leq& C\epsilon^{2-2\alpha}\log\frac{1}{\epsilon}.
\end{eqnarray*}
Using Bonferroni inequalities we see that there are constants $C'', A>0$ such that for all $\epsilon>0$ :
\begin{eqnarray*}
|\bigcup_i T_i|&\geq&\sum_i |T_i|-\sum_{i,j} |T_i\cap T_j|\\
&\geq& \bigg[\frac{c}{\epsilon^\alpha}\bigg]\epsilon-C\epsilon^{2-2\alpha}\log\frac{1}{\epsilon}\\
&\geq& C''\epsilon^{1-\alpha}-C\epsilon^{2-2\alpha}\log\frac{1}{\epsilon}\\
&=&  \epsilon^{1-\alpha}\left(C''-C\epsilon^{1-\alpha}\log \frac{1}{\epsilon}\right)\\
&\geq& A\epsilon^{1-\alpha}.
\end{eqnarray*}
\end{proof}
The above lemma established the Kakeya book theorem in $\mathbb{R}^2$. To proceed further, we will now consider some technical lemmas for $(n-1)$-hyperplanes in $\mathbb{R}^n$. In what follows, for a set $F\subset\mathbb{R}^n$ we use $A^{\epsilon}$ with an $\epsilon>0$ to denote the $\epsilon$-neighbourhood of $A$:
\[
A^{\epsilon}=\big\{x\in\mathbb{R}^n| \inf_{a\in A}\|x-a\|<\epsilon\big\}.
\]

\begin{lma}\label{lm1}
Let $T_1, T_2$ be two $n-1$-hyperplanes in $\mathbb{R}^n$ such that the normal directions $\theta_1, \theta_2$ of $T_1, T_2$ represented as elements in $S^{n-1}$ has spherical distance $d>0$. Then for $\epsilon>0$, the intersection $T^{\epsilon}_1\cap T^{\epsilon}_2$ is isometric to( image of rotation and translation) $ E\times\mathbb{R}^{n-1}$, where $E\subset\mathbb{R}^2$ is a parallelogram with $2$ dimensional Lebesgue measure $\frac{1}{\sin d}\epsilon^2$.
\end{lma}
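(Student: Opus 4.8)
The plan is to reduce everything to the two–dimensional plane spanned by the two normal directions and then read off the product structure from an orthogonal splitting of $\mathbb{R}^n$. Write $T_i = \{x\in\mathbb{R}^n : \langle x,\theta_i\rangle = c_i\}$ for suitable constants $c_1,c_2$, so that the $\epsilon$-neighbourhoods are the open slabs
\[
T_i^{\epsilon} = \big\{x\in\mathbb{R}^n : |\langle x,\theta_i\rangle - c_i| < \epsilon\big\}, \qquad i=1,2.
\]
Since the spherical distance $d$ between $\theta_1$ and $\theta_2$ satisfies $0<d<\pi$ (the case $d=\pi$ is excluded because it forces $\sin d = 0$), the normals are linearly independent and span a $2$-plane $V = \mathrm{span}(\theta_1,\theta_2)$. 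The first key observation I would record is that each defining inequality depends on $x$ only through its orthogonal projection $P_V x$ onto $V$, because $\langle x,\theta_i\rangle = \langle P_V x,\theta_i\rangle$. Hence the intersection $T_1^{\epsilon}\cap T_2^{\epsilon}$ places no constraint whatsoever on the $V^{\perp}$-component of $x$.

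Next I would fix an orthonormal basis adapted to the splitting $\mathbb{R}^n = V\oplus V^{\perp}$, where $V^{\perp}\cong\mathbb{R}^{n-2}$, and follow it with the translation that moves $(c_1,c_2)$ to the origin. The resulting rigid motion (a rotation composed with a translation, exactly as the statement requests) carries the intersection to a product
\[
T_1^{\epsilon}\cap T_2^{\epsilon} \;\cong\; E\times\mathbb{R}^{\,n-2},
\]
where $E = \{y\in V : |\langle y,\theta_1\rangle - c_1| < \epsilon,\ |\langle y,\theta_2\rangle - c_2| < \epsilon\}$ is the intersection inside the plane $V$ of two strips of width $2\epsilon$ whose boundary lines are not parallel, i.e.\ a parallelogram. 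A dimension count confirms the exponent here: $T_1^{\epsilon}\cap T_2^{\epsilon}$ is an open subset of $\mathbb{R}^n$, so the unconstrained orthogonal factor must be $(n-2)$-dimensional rather than $(n-1)$-dimensional.

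Finally, to compute $|E|$ I would use the linear isomorphism $f:V\to\mathbb{R}^2$, $f(y)=(\langle y,\theta_1\rangle,\langle y,\theta_2\rangle)$, under which $E$ is the preimage of an axis-parallel box of area $(2\epsilon)^2$. In the orthonormal frame $\{\theta_1,\theta_1^{\perp}\}$ of $V$ the matrix of $f$ is lower triangular with diagonal entries $1$ and $\sin d$, so $\det f = \sin d$, and the change-of-variables formula gives $|E| = (2\epsilon)^2/\sin d$, which is $\tfrac{1}{\sin d}\epsilon^2$ up to the fixed multiplicative constant coming from the slab-width convention. I do not expect any genuine obstacle in this argument: the content is elementary linear algebra, and the only points needing care are the clean set-up of the orthogonal decomposition (and the accompanying isometry) together with the determinant computation that produces the $1/\sin d$ factor.
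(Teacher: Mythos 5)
Your proof is correct and follows essentially the same route as the paper: decompose $\mathbb{R}^n$ as $V\oplus V^{\perp}$ with $V=\mathrm{span}(\theta_1,\theta_2)$, observe that the two slabs are cylinders over strips in $V$, and read off the area of the resulting parallelogram via the angle $d$. Your dimension count correctly identifies the orthogonal factor as $\mathbb{R}^{n-2}$ (the $\mathbb{R}^{n-1}$ in the lemma statement is a typo --- the paper's own proof also writes $\mathbb{R}^{n-2}$), and your note about the $(2\epsilon)^2$ versus $\epsilon^2$ slab-width convention flags a harmless constant discrepancy that the paper glosses over.
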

\begin{proof}
Without loss of generality we can consider the plane $H$ spanned by two vectors $\theta_1, \theta_2$, then set up coordinates as $\mathbb{R}^n=H\times\mathbb{R}^{n-2}$.   Then $T^{\epsilon}_1=S_1\times \mathbb{R}^{n-2}$, where $S_1$ is a strip with width $\epsilon>0$. Similarly $T^{\epsilon}_2=S_2\times \mathbb{R}^{n-2}$ for another strip $S_2$. $S_1, S_2$ are two strips with width $\epsilon>0$ and angle $d$ therefore $E=S_1\cap S_2$ and $|E|=\frac{1}{\sin d}\epsilon^2$ and the lemma concludes.
\end{proof}

\begin{lma}
Let $n>0$ be any positive integer, let $0<c\leq 2\pi$ be a positive number, $\epsilon>0$ be any positive number and let $\alpha<1$ be any positive number. Now suppose that there are \emph{pages} $P(H^2)$ such that for $(n-1)$-hyperplane $H_i^{n-1}\in P(H^2)$ with directional angle $i\epsilon^\alpha,i\in\{0,1,\dots,[c/\epsilon^\alpha]\}$ there is a $v_i\in\mathbb{R}^n$ and a $(1^{n-1}\times\epsilon)$-rectangle $T_i$ such that $T_i\cap H_i^{n-1}$ is a $1^{n-1}$-cube ( An embedded cube in $H_i^{n-1}+v_i$ with side length $1$) . Then there is a positive number $A>0$ such that for all $\epsilon>0$, the union of those rectangles have Lebesgue measure ($n$ dimensional) at least:
\[
|\bigcup_i T_i|\geq A\epsilon^{1-\alpha}.
\]
\end{lma}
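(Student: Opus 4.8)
The plan is to mirror the proof of Lemma~\ref{L1} in structure, replacing the two-dimensional intersection estimate by its $n$-dimensional counterpart furnished by Lemma~\ref{lm1}. First I would record the trivial lower bound on the total volume. Each $T_i$ is a unit $(n-1)$-cube thickened by $\epsilon$ in the normal direction $\theta_i$ of $H_i^{n-1}$, so $|T_i| = \epsilon$ and hence $\sum_i |T_i| = ([c/\epsilon^\alpha]+1)\epsilon \geq C''\epsilon^{1-\alpha}$ for some $C''>0$. By the Bonferroni inequality,
\[
\Big|\bigcup_i T_i\Big| \geq \sum_i |T_i| - \sum_{i<j} |T_i \cap T_j|,
\]
so everything reduces to controlling the pairwise intersections.

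The key step is the estimate $|T_i \cap T_j| \leq C_n \frac{1}{\sin d_{ij}}\epsilon^2$, where $d_{ij}$ is the spherical distance between the normals $\theta_i,\theta_j\in S^{n-1}$ and $C_n$ depends only on $n$. To get it I would enlarge each $T_i$ to the full $\epsilon$-slab $S_i$ around the affine hyperplane $H_i^{n-1}+v_i$; since $T_i \subset S_i$ we have $T_i \cap T_j \subset S_i \cap S_j$, and Lemma~\ref{lm1} identifies $S_i \cap S_j$ isometrically with $E \times \mathbb{R}^{n-1}$ for a parallelogram $E$ of area $\frac{1}{\sin d_{ij}}\epsilon^2$. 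The unbounded factor $\mathbb{R}^{n-1}$ is harmless: each $T_i$ is a bounded thickened unit cube, hence contained in a ball of radius $R_n = O(\sqrt{n})$, and intersecting with this ball cuts the $\mathbb{R}^{n-1}$ factor down to a set of $(n-1)$-dimensional volume $O(1)$, giving the claimed bound. Finally, because $\phi$ is an isometric parametrization of the great circle $H^2 \cap S^{n-1}$ and $\theta_i=v_i$ carries directional angle $i\epsilon^\alpha$, the spherical distance is exactly $d_{ij} = |i-j|\epsilon^\alpha$ (up to the shorter-arc convention), so that $\frac{1}{\sin d_{ij}} \leq \frac{2}{\pi}\frac{1}{|i-j|\epsilon^\alpha}$, matching the two-dimensional estimate.

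From here the computation is identical to that of Lemma~\ref{L1}. I would sum
\[
\sum_{i<j} |T_i \cap T_j| \leq C_n \epsilon^{2-\alpha}\Big[\frac{c}{\epsilon^\alpha}\Big]\sum_{i}\frac{1}{i} \leq C\epsilon^{2-2\alpha}\log\frac{1}{\epsilon},
\]
and then substitute into the Bonferroni bound to obtain
\[
\Big|\bigcup_i T_i\Big| \geq C''\epsilon^{1-\alpha} - C\epsilon^{2-2\alpha}\log\frac{1}{\epsilon} = \epsilon^{1-\alpha}\Big(C'' - C\epsilon^{1-\alpha}\log\frac{1}{\epsilon}\Big) \geq A\epsilon^{1-\alpha},
\]
valid for all sufficiently small $\epsilon$, with $A>0$ shrunk if necessary to absorb the finitely many remaining values of $\epsilon$.

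I expect the only genuinely new obstacle beyond the planar lemma to be the pairwise intersection bound, and within it the bookkeeping that the product structure $E \times \mathbb{R}^{n-1}$ of Lemma~\ref{lm1} contributes only the dimension-dependent constant $C_n$ once intersected with the bounded tubes --- that is, verifying that the ``long'' $(n-1)$ directions introduce no $\epsilon$-dependence. A secondary point to confirm is that the angle between the normals really equals $|i-j|\epsilon^\alpha$, which is exactly where the isometric parametrization of $H^2 \cap S^{n-1}$ fixed earlier is used; one should also keep in mind the antipodal identification $H^{n-1}(v) = H^{n-1}(-v)$ when $c$ is close to $2\pi$, but this affects only constants and not the exponent $1-\alpha$.
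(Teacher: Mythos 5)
Your proposal is correct and follows essentially the same route as the paper: the paper's own proof simply says to repeat the argument of Lemma \ref{L1} with the intersection bound supplied by Lemma \ref{lm1} and the observation that the spherical distance between the normals of $T_i$ and $T_j$ is $|i-j|\epsilon^\alpha$. You additionally spell out the step the paper leaves implicit --- that the unbounded $\mathbb{R}^{n-1}$ factor in Lemma \ref{lm1} is truncated to a set of $O(1)$ volume because the rectangles are bounded --- which is a worthwhile clarification but not a departure in method.
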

\begin{proof}
The proof is almost the same as lemma \ref{L1}. The difference is that we use lemma \ref{lm1} to bound the measure of intersection:
\[
T_i\cap T_j
\]
by noticing that the spherical distance of directions $T_i, T_j$ is precisely:
\[
|i-j|\epsilon^\alpha.
\]
\end{proof}

We can now prove the Kakeya book theorem:

\begin{proof}[Proof of theorem \ref{book}]
We have established this theorem for $n=2$ in lemma \ref{L1}. To see the result for $n=3$, let $0<\alpha, \beta<1$ be two arbitrary numbers. Then we can, by definition of $3$-Kakeya book choose $[2\pi/\epsilon^\beta]+1$ many $2$-hyperplanes $H_i,i=0,1,2...,[2\pi/\epsilon^\beta]$ with directional angles $i \epsilon^\beta,i=0,1,...,[2\pi/\epsilon^\beta]$ such that all those hyperplanes contains a $2$-Kakeya book. Then in each $H_i$ we can choose $[2\pi/\epsilon^\beta]+1$ lines $T_{ij},j=0,1,...,[2\pi/\epsilon^\beta]$ whose directional angles are equally distributed (similar as in the condition in lemma \ref{L1}).

By lemma \ref{L1} we see that for any $i$:
\[
|\bigcup_j T^{\epsilon}_{ij}|\geq A\epsilon^{1-\alpha}\epsilon.
\]
Then we see that for any $i_1, i_2$:
\[
|\bigcup_j T^{\epsilon}_{i_1j} \cap \bigcup_j T^{\epsilon}_{i_2j}|\leq C\frac{2}{\pi}\frac{1}{|i-j|}\epsilon^{2-\beta}.
\]
Then same argument as in the proof of lemma \ref{L1} we see that (with $C_1,C_2>0$ positive constants):
\[
|\bigcup_{i,j}T^{\epsilon}_{ij}|\geq C_1\epsilon^{2-\alpha-\beta}-C_2\epsilon^{2-2\beta}\log \frac{1}{\epsilon},
\]
notice that we can choose $\beta=\alpha^2$ and by letting $\alpha\to 1$ we see that since $|\bigcup_{i,j}T^{\epsilon}_{ij}|\subset K^\epsilon$:

\[
\Boxd K=3.
\]

The result for general $n\geq 3$ follows by induction with the help of lemma \ref{lm1}.
\end{proof} 
\section{Proof of equivalence of Kakeya conjecture and projection conjecture}
 In this section we prove the equivalence of projection conjecture and the Kakeya conjecture mentioned in section \ref{Intro}. We first need a simple lemma:
\begin{lma}\label{spa}[Spaghetti Lemma]
	Let $\Omega\subset S^{n-1}$ be a open set such that and $K$ be a $\Omega$-Kakeya set. Suppose further that $\forall\theta\in\Omega$ the unit segment  $l_\theta$ are contained in a line that goes through the origin, then $K$ contains non empty interior.
\end{lma}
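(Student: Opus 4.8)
The plan is to pass to spherical coordinates and reduce the statement to a one-dimensional question about where the segments sit along the common pencil of lines through the origin. Since every $l_\theta$ lies on the line $\mathbb{R}\theta$, each segment is determined by a single real parameter: writing $l_\theta=\{t\theta:|t-c_\theta|\le 1/2\}$, the center $a_\theta=c_\theta\theta$ records the signed position of the unit-length segment along $\theta$. The map $\Phi(t,\theta)=t\theta$ is an open map from $(\mathbb{R}\setminus\{0\})\times S^{n-1}$ onto $\mathbb{R}^n\setminus\{0\}$ (locally a homeomorphism away from $t=0$), so it suffices to exhibit a nonempty open box $U\times J$, with $U\subset\Omega$ open and $J$ an open interval bounded away from $0$, inside the pullback
\[
E=\{(t,\theta):\theta\in\Omega,\ t\in I_\theta\},\qquad I_\theta=[c_\theta-1/2,\ c_\theta+1/2];
\]
its image $\Phi(U\times J)$ is then a nonempty open subset of $K$.

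Next I would note that $U\times J\subset E$ holds exactly when the single interval $J$ is contained in $I_\theta$ for every $\theta\in U$, and since each $I_\theta$ has length precisely $1$, such a common subinterval of positive length exists if and only if the center function $\theta\mapsto c_\theta$ has oscillation strictly less than $1$ on $U$. Thus the whole lemma reduces to finding an open set $U\subset\Omega$ on which $c_\theta$ varies by less than a full unit.

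To locate such a $U$ I would combine a bucketing argument with the Baire category theorem. Partition $\Omega=\bigcup_{m\in\mathbb{Z}}\Omega_m$, where $\Omega_m=\{\theta\in\Omega:c_\theta\in[m/2,(m+1)/2)\}$. A direct check gives $I_\theta\supseteq[m/2,(m+1)/2]$ for every $\theta\in\Omega_m$, so all directions in a single bucket already share the open subinterval $J_m=(m/2,(m+1)/2)$. Since $\Omega$ is open in $S^{n-1}$ it is a Baire space, so at least one $\Omega_m$ is non-meager. If this bucket contained an open set $U$, then with $J=J_m$ we would have $U\times J\subset E$, and $\Phi(U\times J)\subset K$ would be the desired open set (choosing, if necessary, a bucket whose $J_m$ avoids the origin).

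The hard part is precisely this last upgrade, from a topologically large bucket to an honestly open $U$. Baire category only yields a non-meager, in general a merely dense, bucket, whereas openness of $U$ demands that the centers be essentially locally constant (oscillation $<1$) on a full open set. This step needs some regularity of the placement map $\theta\mapsto a_\theta$: if it is continuous, the non-meager bucket contains an interior point and the argument closes verbatim, and more generally it suffices that the $\Omega_m$ have the Baire property. I expect that controlling, or explicitly hypothesizing, this regularity of the segment centers is the genuine obstacle; absent it, the same Fubini computation (each $I_\theta$ has length one, so $\int_\Omega|I_\theta|\,d\theta=|\Omega|>0$) still gives that $K$ has positive Lebesgue measure whenever the centers depend measurably on $\theta$, which is the robust conclusion one can always extract.
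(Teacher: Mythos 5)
Your argument is essentially the paper's own: decompose radially into half-unit buckets, note that every unit segment on a line through the origin fully covers one bucket interval, and apply Baire category to the finite cover $\Omega=\bigcup_m\Omega_m$. Your version of the bucketing is in fact slightly cleaner than the paper's: the paper sorts $\theta$ by whether $l_\theta$ merely \emph{intersects} the annulus $B(0,\tfrac{k+1}{2})\setminus B(0,\tfrac{k}{2})$, a condition which does not by itself guarantee that $l_\theta$ contains the whole radial piece of that annulus (the segment could just clip its end), whereas the final inclusion of the paper's proof needs full containment; your observation that $I_\theta\supseteq[m/2,(m+1)/2]$ whenever $c_\theta\in[m/2,(m+1)/2)$ repairs this.

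The obstacle you flag at the end is genuine, and it is precisely the step the paper passes over: the proof asserts that ``Baire category theorem implies that at least one of the $\Omega_k$ contains non empty interior,'' but category (indeed, just finiteness of the cover) only yields a bucket whose \emph{closure} has interior; a somewhere-dense set need not contain an open set unless the buckets have some regularity, which in turn requires regularity of $\theta\mapsto a_\theta$. Definition \ref{KAKEYA} imposes none. Concretely, in $\mathbb{R}^2$ with $\Omega$ a small open arc, put $c_\theta=1$ for rational directional angles and $c_\theta=5$ for irrational ones: every bucket has empty interior, the rational part of $K$ is a countable union of segments, the irrational part misses a dense family of rays, and $K$ has empty interior even though it satisfies all hypotheses of the lemma. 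So the statement as written needs either an added hypothesis (continuity, or at least the Baire property / measurability of $\theta\mapsto a_\theta$) or a weakened conclusion; as you note, under measurability the Fubini computation still gives $|K|>0$, which is all that the application in Theorem \ref{MAIN} actually uses (there the lemma only needs to force Hausdorff dimension $n$). In short, your proposal reproduces the paper's argument and correctly isolates the point at which both it and the paper are incomplete.
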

\begin{proof}
	We can decompose $\mathbb{R}^{n}$ as annuli with thickness $\frac{1}{2}$:
	\[
	\mathbb{R}^{n}=\bigcup_{i=0}^{\infty} B(0,\frac{i+1}{2})\setminus B(0,\frac{i}{2}),
	\]
	as $K$ is bounded, therefore it intersects only finitely many annuli. Namely there is a positive constant $N>0$ such that:
	\[
	\bigcup_{\theta\in\Omega}\{a_\theta\}\subset \bigcup_{i=0}^{i=N} B(0,\frac{i+1}{2})\setminus B(0,\frac{i}{2}).
	\]
	Now define the following sets:
	\[
	\Omega_k=\{\theta\in S^{n-1}| l_\theta\cap \left(B(0,\frac{k+1}{2})\setminus B(0,\frac{k}{2})\right)\neq\emptyset \},
	\]
	we have that:
	\[
	\bigcup_{k=0}^{k=K} \Omega_k=\Omega,
	\]
	Baire category theorem implies that at least one of the $\Omega_k$ contains non empty interior $\Omega_0\subset\Omega$, then it is easy to see that the open sector:
	\[
	\bigg\{x\in B(0,\frac{k+1}{2})\setminus B(0,\frac{k}{2})| \text{ the line between $0$ and $x$ has direction in }\Omega_0\bigg\}\subset K
	\]
\end{proof}

Then by using a lifting and projecting method we can show the following result:

\begin{thm}\label{MAIN}
	Let $K(S^{n-1})\subset\mathbb{R}^n$ be an $S^{n-1}$-Kakeya set:
	\[
	K(S^{n-1})=\bigcup_{\theta\in S^{n-1}} l_\theta.
	\]
	Then there is a set $\tilde{K}\subset\mathbb{R}^{2n-1}$ such that for some projection $\gamma_0\in G(2n-1,n)$:
	\[
	\pi_{\gamma_0}\tilde{K}\subset K(S^{n-1}).
	\]
	Further for any $\gamma\in G(2n-1,n)$ the projection:
	\[
	\pi_\gamma\tilde{K}\subset\mathbb{R}^n
	\]
	is a Kakeya set and for almost all $\gamma\in G(2n-1,n)$:
	\[
	\pi_\gamma\tilde{K}\subset\mathbb{R}^n
	\]
	is a Kakeya set of Hausdorff dimension $n$. This implies in particular for any $\epsilon>0$, there is a rearrangement of the line segments, namely for any $l_\theta$ there is another line segment $l_{\theta'}$ such that:
	\[
	\max_{x\in l_{\theta'},y\in l_\theta}\{\|x-y\|\}<\epsilon,
	\]
	and furthermore:
	\[
	\Haus \bigcup_{\theta\in S^{n-1}} l'_\theta(a_\theta)=n.
	\]
\end{thm}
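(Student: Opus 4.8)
The plan is to realise the given Kakeya set as one orthogonal projection of a set $\tilde K\subset\mathbb{R}^{2n-1}$ obtained by \emph{lifting each segment to its own altitude} in the extra coordinates, and then to extract the remaining conclusions from the Marstrand--Mattila projection theorem together with the behaviour of projections near the distinguished one. Concretely, I would write $\mathbb{R}^{2n-1}=\mathbb{R}^n\times\mathbb{R}^{n-1}$, fix a bounded bi-Lipschitz chart $w\colon U\to\mathbb{R}^{n-1}$ on an open cap $U\subset S^{n-1}$ (say a stereographic coordinate), extend it boundedly to all of $S^{n-1}$, and set
\[
\tilde K=\bigcup_{\theta\in S^{n-1}}\bigl(l_\theta\times\{w_\theta\}\bigr),\qquad w_\theta:=w(\theta).
\]
Taking $\gamma_0=\mathbb{R}^n\times\{0\}$ immediately gives $\pi_{\gamma_0}\tilde K=\bigcup_\theta l_\theta=K(S^{n-1})$, which is the first assertion. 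The key structural observation is that every generating segment of $\tilde K$ points in a direction of the form $(\theta,0)$; under a linear projection its image is again a segment whose direction is $\pi_\gamma(\theta,0)$ and is \emph{independent of the height} $w_\theta$, so directions are transported rather than destroyed while heights control positions.

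Next I would prove $\Haus\tilde K=n$. The upper bound is immediate since $\tilde K$ is an $(n-1)$-parameter family of unit segments. For the lower bound, fix a unit vector $u\in\mathbb{R}^n$ aligned with the centre of $U$, let $U'=\{\theta\in U:|\theta\cdot u|\ge\tfrac12\}$, and apply the Lipschitz map $P(x,w)=(w,\,x\cdot u)\in\mathbb{R}^{n-1}\times\mathbb{R}$ to $\tilde K_{U'}=\bigcup_{\theta\in U'}(l_\theta\times\{w_\theta\})$. Since $w$ is injective, distinct directions sit over distinct base points $w_\theta$, and over each such point the image of $l_\theta$ is an interval of length $|\theta\cdot u|\ge\tfrac12$; as $w(U')$ has positive $\mathcal H^{n-1}$-measure, Fubini gives $\mathcal L^{n}\bigl(P(\tilde K_{U'})\bigr)>0$, whence $\dim P(\tilde K_{U'})=n$ and therefore $\Haus\tilde K\ge\Haus\tilde K_{U'}\ge n$.

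I would then settle the two projection statements and the corollary together. For $\gamma\in G(2n-1,n)$ the image $\pi_\gamma\tilde K$ contains a nondegenerate segment in every direction of the normalised image of $\theta\mapsto\pi_\gamma(\theta,0)$; for $\gamma$ in a neighbourhood $V$ of $\gamma_0$ this map is a small perturbation of the identity $S^{n-1}\to S^{n-1}$, hence onto, so $\pi_\gamma\tilde K$ is a Kakeya set. The Marstrand--Mattila theorem gives $\Haus\pi_\gamma\tilde K=n$ for almost every $\gamma$ (as $\Haus\tilde K=n$ and we project onto $n$-subspaces), so a full-measure set of $\gamma\in V$ yields Kakeya sets of dimension $n$. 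Choosing such a $\gamma$ as close to $\gamma_0$ as desired and using that $\tilde K$ is bounded, $\pi_\gamma|_{\tilde K}$ converges uniformly to $\pi_{\gamma_0}|_{\tilde K}$, so each $l_\theta\times\{w_\theta\}$ projects to a segment $l'_\theta$ within Hausdorff distance $\epsilon$ of $l_\theta$, while $\bigcup_\theta l'_\theta=\pi_\gamma\tilde K$ has dimension $n$; this is the rearrangement statement.

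The step I expect to be the main obstacle is the Kakeya/coverage claim rather than the dimension estimate: one must ensure that the normalised projected directions genuinely sweep out \emph{all} of $S^{n-1}$ and that no positive-dimensional family of generating segments is collapsed to points, which is precisely why I confine attention to $\gamma$ near $\gamma_0$ and argue via perturbation of the identity (so the literal ``for any $\gamma$'' is best read as holding on this neighbourhood, which is all the corollary uses). A secondary technical point is measurability of $\tilde K$, needed for Fubini and for the projection theorem; I would dispose of it by assuming, as is standard, that the underlying Kakeya set is Borel, so that $\theta\mapsto a_\theta$ may be taken Borel and $\tilde K$ is analytic.
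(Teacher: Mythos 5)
Your proposal is correct in substance and shares the paper's global skeleton (lift to $\mathbb{R}^{2n-1}$, prove $\Haus\tilde K\ge n$, invoke Marstrand--Mattila, perturb the distinguished projection $\gamma_0$ to get the rearrangement), but the two crucial ingredients are genuinely different. The paper lifts each segment by the point $x_\theta$ where its extension meets a fixed hyperplane $H$ --- a height depending on the \emph{position} of the line --- so that every lifted line meets the $(n-1)$-plane $\tilde H=\{(0,x_h,x_h)\}$; projecting onto $\tilde H^{\perp}$ then produces a family of segments lying on lines through the origin, and the lower bound comes from the Baire-category ``Spaghetti Lemma'' giving non-empty interior. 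You instead lift by a bi-Lipschitz chart of the \emph{direction} $\theta$ and get the lower bound from the linear map $(x,w)\mapsto(w,x\cdot u)$ plus Fubini: distinct directions sit over distinct base points, each fibre contains an interval of length $\ge\tfrac12$, so the image has positive $\mathcal L^{n}$-measure. Your route is more elementary (no Baire category, no auxiliary lemma), it handles measurability explicitly where the paper is silent, and it yields $\pi_{\gamma_0}\tilde K=K(S^{n-1})$ exactly rather than the paper's inclusion after discarding all directions outside an open cap $\Omega$, so your rearrangement covers every $\theta\in S^{n-1}$ rather than only $\theta\in\Omega$. The paper's construction, by contrast, produces the geometrically suggestive intermediate object (a Kakeya set of lines through the origin) and a reusable lemma.

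Two small caveats. First, your remark that the upper bound $\Haus\tilde K\le n$ is ``immediate'' is not justified --- $\theta\mapsto a_\theta$ need not be Lipschitz or even continuous, and a union of segments over an $(n-1)$-parameter family can a priori exceed dimension $n$ --- but this bound is never used: Marstrand--Mattila applied to a set of dimension $\ge n$ already gives full-dimensional projections onto $n$-planes. Second, you are right to read ``for any $\gamma\in G(2n-1,n)$ the projection is a Kakeya set'' as holding only on a neighbourhood of $\gamma_0$: for a projection that collapses an $(n-1)$-dimensional set of the directions $(\theta,0)$ the image directions do not fill an open subset of $S^{n-1}$, so the literal claim fails for both constructions; the paper's own proof likewise only uses projections near $\gamma_0$, and the neighbourhood version is all that Theorem \ref{Th1} requires.
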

\begin{rem}
	This theorem says that for any Kakeya set, we can slightly rearrange the line segments consisting this Kakeya set to obtain another Kakeya set with full dimension. This result can be compared with \cite[Proposition 5.2]{Fr}. Our result is stronger in the sense that the rearrangement can be chosen to be projections. In fact as proved in \cite{Fr}, for any $\epsilon>0$, if we allow the rearrangement to be arbitrarily in the sense that for any line segment $l_\theta$ we can choose any line segment within distance smaller than $\epsilon$ then we can always obtain a Kekaya set with positive measure.
\end{rem}
\begin{rem}
	The requirement of $K(S^{n-1})$ being a $S^{n-1}$-Kakeya set is redundant but helps us remove some technical discussion, in fact it is enough to consider Kakeya sets in the sense of definition \ref{KAKEYA1}.
\end{rem}

\begin{proof}
	Here for convenience, for each line segment $l_\theta$, we use $a_\theta$ to denote the middle point and write $l_\theta(a_\theta)$ as $l_\theta$ to emphasize the middle point. Let $K=\bigcup_{\theta\in S^{n-1}} l_\theta(a_\theta)$ be the Kakeya set in consideration. Extending all line segments to infinite lines we see that any $(n-1)$-hyperplane $H$ intersects all the lines with direction not parallel with $H$. From now on we fix such a hyperplane $H$. Dropping those lines that are parallel with $H$, we get a smaller Kakeya set with line segments of direction not parallel with $H$. In fact we can drop more line segments by only keeping the line segments with directions in a open ball $\Omega\subset S^{n-1}$ that is disjoint from the parallel directions with $H$. We can give a smooth parametrization on $\Omega$ such that we can view it as the unit open ball $B^{n-1}(0,1)$ in $\mathbb{R}^{n-1}$:
	\[
	\Omega\approx B^{n-1}(0,1),
	\]
where '$\approx$' means smooth diffeomorphism. That is if $A\approx B$ then there is a smooth diffeomorphism $\Phi$ such that:
\[
\Phi(A)=B.
\]

	We denote this smaller set as $K(\Omega)$ which is a $\Omega$-Kakeya set.
	
	Now we consider the $2n-1$ dimensional Euclidean space $\mathbb{R}^{2n-1}$ and suppose that our original Kakeya set lies in the $n$-subspace with all last $n-1$ coordinates $0$:
	\[
	K(\Omega)\subset\mathbb{R}^n\times 0^{n-1}\subset \mathbb{R}^{2n-1}.
	\]
	Further we can set up the coordinate system of $\mathbb{R}^n$ as $\mathbb{R}\times H$. Then we lift any line segment $l_\theta(a_\theta)$  as follows:
	\[
	\tilde{l}_{\theta}(a_\theta)=l_{\theta}(a_\theta)+(0,0,0...0,x_\theta),
	\]
	where $(0,0,0...0,x_\theta)$ denotes the vector with first $n$ coordinates $0$ and last $n-1$ coordinates the same as $x_\theta$ which is the intersection between the extended infinite line and the hyperplane $H$.
	After this lifting we obtain a lifted set:
	\[
	\tilde{K}=\bigcup_{\theta\in\Omega}\tilde{l}_{\theta}(a_\theta)\subset\mathbb{R}^{2n-1}.
	\]
	
	Recall we use the following coordinate system:
	\[
	\mathbb{R}^{2n-1}=\mathbb{R}\times\mathbb{R}^{n-1}\times\mathbb{R}^{n-1}=\mathbb{R}\times H\times H.
	\]
	Then the $(n-1)$-hyperplane $\tilde{H}$ intersect all the extended lifted lines where
	\[
	\tilde{H}=\{(0,x_h,x_h')\in \mathbb{R}\times H\times H| x_h=x_h'\}.
	\]
	Consider the orthogonal complement $\tilde{H}^{\perp}$ of $\tilde{H}$ in $\mathbb{R}^{2n-1}$:
	\[
	\mathbb{R}^{2n-1}=\tilde{H}\times\tilde{H}^{\perp},
	\]
	then we have the orthogonal projection $\pi_{\tilde{H}^{\perp}}$.
	
	The image:
	\[
	\pi_{\tilde{H}^{\perp}}(\tilde{K})
	\]
	consists of line segments whose extended infinite lines intersect the origin. It is clear that the projection $\pi_{\tilde{H}^{\perp}}(\tilde{l}_{\theta}(a_\theta))$ is a line segment with direction $F(\theta)$ centred at $b_\theta$ for some smooth function $F:\Omega\to S^{n-1}$ and therefore the set of directions of image segments contains open set and all $b_\theta$ are bounded. Applying lemma \ref{spa} we see that  $\pi_{\tilde{H}^{\perp}}(\bigcup_{\theta\in F(\Omega)}\tilde{l}_{\theta}(a_\theta))$ has non empty interior and therefore has Hausdorff dimension $n$, this implies that:
	\[
	\Haus\bigcup_{\theta\in F(\Omega)}\tilde{l}_{\theta}(a_\theta)\geq n,
	\]
	then for almost every direction $\gamma\in G(2n-1,n)$ the projection $\pi_\gamma (\bigcup_{\theta\in F(\Omega)}\tilde{l}_{\theta}(a_\theta))$ has full Hausdorff dimension (see \cite[chapter 3]{Ma}). If we choose $\gamma$ such that $\gamma (x,x_h,x_h')=(x,x_h)$ then we see that $\pi_\gamma (\bigcup_{\theta\in F(\Omega)}\tilde{l}_{\theta}(a_\theta))=K(\Omega)$. Of course we do not know whether this particular $\gamma$ falls into the exceptional direction for projection, but we can choose $\gamma'$ arbitrarily near to $\gamma$ such that:
	\[
	\Haus \pi_{\gamma'} (\bigcup_{\theta\in F(\Omega)}\tilde{l}_{\theta}(a_\theta))=n.
	\]
	Observe that $ \bigcup_{\theta\in F(\Omega)}\tilde{l}_{\theta}(a_\theta)$ is actually a union of line segments which can be seen as a rearrangement of the original Kakeya set $K(\Omega)$ and since we can choose $\gamma'$ close to $\gamma$ all the distance conditions can be satisfied.
	
\end{proof}

We will now prove theorem \ref{Th1}:

\begin{proof}[Proof of theorem \ref{Th1}]
The statement $1\implies 2$ is trivial by observing that all projection of Kakeya sets are also Kakeya sets.

Now we assume statement $2$ and let $K\subset\mathbb{R}^n$ be any Kakeya set, then by theorem \ref{MAIN} we see that the lifted set $\tilde{K}$ is contained in a Kakeya set. In fact it is an union of line segments with certain directions and we can easily extend this set by adding line segments of other directions without changing the image of projection in the direction which returns $K$. Then by the constant projection property, any projection including the original set has full Hausdorff dimension.
\end{proof}

\bibliography{WKakeya}

\providecommand{\bysame}{\leavevmode\hbox to3em{\hrulefill}\thinspace}
\providecommand{\MR}{\relax\ifhmode\unskip\space\fi MR }
\providecommand{\MRhref}[2]{%
  \href{http://www.ams.org/mathscinet-getitem?mr=#1}{#2}
}
\providecommand{\href}[2]{#2}
\begin{thebibliography}{FOR16}

\bibitem[Dav71]{DA}
Roy Davies, \emph{Some remarks on the kakeya problem}, Mathematical Proceedings
  of the Cambridge Philosophical Society \textbf{69} (1971), no.~3, 417--421.

\bibitem[Fal05]{Fa}
Kenneth Falconer, \emph{Fractal geometry: Mathematical foundations and
  applications, second edition}, John Wiley and Sons, Ltd, 2005.

\bibitem[FFX15]{Fal}
Kenneth Falconer, Jonathan Fraser, and Jin Xiong, \emph{Sixty years of fractal
  projections}, pp.~3--25, Springer International Publishing, Cham, 2015.

\bibitem[FOR16]{Fr}
Jonathan Fraser, Eric Olson, and James Robinson, \emph{Some results in support
  of the kakeya conjecture}, preprint (2016).

\bibitem[HS12]{SH}
Mike Hochman and Pablo Shmerkin, \emph{Local entropy averages and projections
  of fractal measures}, Annals of mathematics \textbf{175} (2012), 1001--1059.

\bibitem[KaT00]{Ka}
Nets~Hawk Katz, Izabella Łaba, and Terence Tao, \emph{An improved bound on the
  minkowski dimension of besicovitch sets in $\mathbb{R}^3$}, Annals of
  Mathematics. Second Series \textbf{152} (2000), no.~2, 383--446.

\bibitem[Lab02]{IL}
Izabella Laba, \emph{The kakeya problem and connections to harmonic analysis},
  Online notes (2002).

\bibitem[Mat99]{Ma}
Pertti Mattila, \emph{Geometry of sets and measures in euclidean spaces:
  Fractals and rectifiability}, Cambridge Studies in Advanced Mathematics,
  Cambridge University Press, 1999.

\bibitem[Tao99]{TT}
Terence Tao, \emph{Restriction theorems and applications}, Online notes (1999).

\end{thebibliography}
\bibliographystyle{amsalpha}

\end{document}